\def\NZQ{\Bbb}               
\def\ZZ{{\NZQ Z}}
\def\frk{\frak}               
\def\Phi{{\frk n}}
\def\Phi{{\frk N}}
\def\opn#1#2{\def#1{\operatorname{#2}}} 
\opn\chara{char} \opn\length{\ell} \opn\pd{pd} \opn\rk{rk}
\opn\projdim{proj\,dim} \opn\injdim{inj\,dim} \opn\rank{rank}
\opn\depth{depth} \opn\grade{grade} \opn\height{height}
\opn\embdim{emb\,dim} \opn\codim{codim}
\opn\Tr{Tr} \opn\bigrank{big\,rank}
\opn\superheight{superheight}\opn\lcm{lcm}
\opn\trdeg{tr\,deg}
\opn\reg{reg} \opn\lreg{lreg} \opn\ini{in} \opn\lpd{lpd}
\opn\size{size}\opn\bigsize{bigsize}
\opn\cosize{cosize}\opn\bigcosize{bigcosize}
\opn\sdepth{sdepth}\opn\sreg{sreg}
\opn\link{link}\opn\fdepth{fdepth}
\opn\index{index}
\opn\index{index}
\opn\indeg{indeg}
\opn\N{N}
\opn\SSC{SSC}
\opn\SC{SC}
\opn\lk{lk}
\opn\div{div} \opn\Div{Div} \opn\cl{cl} \opn\Cl{Cl}
\opn\Spec{Spec} \opn\Supp{Supp} \opn\supp{supp} \opn\Sing{Sing}
\opn\Ass{Ass} \opn\Min{Min}\opn\Mon{Mon} \opn\dstab{dstab} \opn\astab{astab}
\opn\Syz{Syz}
\opn\reg{reg}
\opn\Ann{Ann} \opn\Rad{Rad} \opn\Soc{Soc}
\opn\Im{Im} \opn\Ker{Ker} \opn\Coker{Coker} \opn\Am{Am}
\opn\Hom{Hom} \opn\Tor{Tor} \opn\Ext{Ext} \opn\End{End}\opn\Der{Der}
\opn\Aut{Aut} \opn\id{id}
\opn\nat{nat}
\opn\pff{pf}
\opn\Pf{Pf} \opn\GL{GL} \opn\SL{SL} \opn\mod{mod} \opn\ord{ord}
\opn\Gin{Gin} \opn\Hilb{Hilb}\opn\sort{sort}
\opn\initial{init}
\opn\ende{end}
\opn\height{height}
\opn\type{type}
\opn\aff{aff} \opn\con{conv} \opn\relint{relint} \opn\st{st}
\opn\lk{lk} \opn\cn{cn} \opn\core{core} \opn\vol{vol}
\opn\link{link} \opn\star{star}\opn\lex{lex}
\opn\gr{gr}
\def\pot#1#2{#1[\kern-0.28ex[#2]\kern-0.28ex]}
\opn\dirlim{\underrightarrow{\lim}}
\opn\inivlim{\underleftarrow{\lim}}
\def\Implies{\ifmmode\Longrightarrow \else
        \unskip${}\Longrightarrow{}$\ignorespaces\fi}
\def\implies{\ifmmode\Rightarrow \else
        \unskip${}\Rightarrow{}$\ignorespaces\fi}
\def\iff{\ifmmode\Longleftrightarrow \else
        \unskip${}\Longleftrightarrow{}$\ignorespaces\fi}
\newtheorem{Theorem}{Theorem}[section]
 \newtheorem{Lemma}[Theorem]{Lemma}
 \newtheorem{Corollary}[Theorem]{Corollary}
 \newtheorem{Remark}[Theorem]{Remark}
 \newtheorem{Example}[Theorem]{Example}
 \newtheorem{Definition}[Theorem]{Definition}
\let\epsilon\varepsilon
\let\kappa=\varkappa
\def\qed{\ifhmode\textqed\fi
      \ifmmode\ifinner\quad\qedsymbol\else\dispqed\fi\fi}
\def\textqed{\unskip\nobreak\penalty50
       \hskip2em\hbox{}\nobreak\hfil\qedsymbol
       \parfillskip=0pt \finalhyphendemerits=0}
\def\dispqed{\rlap{\qquad\qedsymbol}}
\opn\dis{dis}
\def\pnt{{\raise0.5mm\hbox{\large\bf.}}}
\opn\Lex{Lex}
\begin{document}

\title{On  reduction numbers of products of  ideals
 }
\author{ Dancheng Lu} \author{ Tongsuo Wu}

\address{Dancheng Lu, School of  Mathematical Sciences, Soochow University, 215006 Suzhou, P.R.China}
\email{ludancheng@suda.edu.cn}

\address{Tongsuo Wu, School of  Mathematical Sciences, Shanghai Jiaotong University, 210030 Shanghai, P.R.China}
\email{tswu@sjtu.edu.cn}

\thanks{Corresponding author: Dancheng Lu, School of  Mathematical Sciences, Soochow University, P.R.China, ludancheng@suda.edu.cn }

\keywords{ Standard graded algebra, Reduction number, Regularity, Asymptotically linear function}

\subjclass[2010]{Primary 13D45; Secondary 13C99.}

\begin{abstract} Let $R$ be a standard graded  algebra over an infinite  field $\mathbb{K}$ and $M$  a finitely generated $\ZZ$-graded $R$-module.
Let $I_1,\ldots I_m$ be graded ideals of $R$. The functions $r(M/I_1^{a_1}\ldots I_m^{a_m}M)$ and $r(I_1^{a_1}\ldots I_m^{a_m}M)$ are investigated and their asymptotical behaviours are given.  Here $r(\bullet)$  stands for the reduction number of a finitely generated graded $R$-module $\bullet$.
\end{abstract}

\maketitle

\section{Introduction}

 In this short note, we always assume that $R=\bigoplus_{n\geq 0}R_n$ is a standard graded Noetherian algebra over an infinite field $\mathbb{K}$. Here ``standard graded" means that $R_0=\mathbb{K}$ and $R=\mathbb{K}[R_1]$. As usual, a nonzero element in $R_1$ is called a {\it linear form} of $R$. Let $M$ be a  finitely generated nonzero $\ZZ$-graded $R$-module. Recall the following definition from \cite{L}.

\begin{Definition}\label{dreduction}{\em A graded ideal $J$ of $R$ is called an {\it $M$-reduction}  if $J$ is an ideal generated by linear forms  such that $(JM)_n=M_n$ for $n\gg0$; An $M$-reduction  is called {\it minimal} if  it does not contain any other $M$-reduction. The {\it reduction number} of $M$ with respect to $J$ is defined to be $$r_J(M)=\max\{n\in \ZZ\:\; (JM)_n\neq M_n\}.$$ The {\it reduction number} of $M$ is $$r(M)=\min\{r_J(M)\:\; J \mbox{ is a minimal } M\mbox{-reduction}\}.$$
}\end{Definition}

Let  $I$ be a graded  ideal of $R$.  We show in \cite{L} that the functions $r(M/I^nM)$ and $r(I^nM)$ are both linear functions for  $n\gg 0$, and moreover, the latter one has the same slope as the  function $\mathrm{reg} (I^nM)$, which is also asymptotically linear \cite{CHT,K,TW}. On the contrast, if  we let $I_1,\ldots, I_m$ be  graded  ideals of $R$,  $r(I_1^{a_1}\cdots I_m^{a_m}M)$ needs not be  a linear function of $(a_1,\ldots, a_m)$  asymptotically when $m\geq 2$, as shown by \cite[remark 2.8]{L}. The goal of this  note is to investigate the asymptotically properties of  multi-functions $r(M/I_1^{a_1}\ldots I_m^{a_m}M)$ and $r(I_1^{a_1}\ldots I_m^{a_m}M)$ more carefully.

To simplify the exposition we set $$I^{\mathbf{a}}=I_1^{a_1}\cdots I_m^{a_m} \mbox{\quad for any } \mathbf{a}=(a_1,\ldots, a_m)\in \mathbb{N}^{m}. $$

 Let  ``$\leq $" be the natural partial order on $\mathbb{Z}^m$, namely: $\mathbf{a}\leq \mathbf{b }\Longleftrightarrow  \mathbf{b}-\mathbf{a}\in \mathbb{N}^m, \mbox{ } \forall \mbox{ } \mathbf{a},\mathbf{b}\in \mathbb{Z}^m.$  We say that a formula (or a property) holds for $\mathbf{a}\gg \mathbf{0}$ if there exists a vector $\mathbf{b}\in \mathbb{N}^m$ such that it holds for all $\mathbf{a}\geq \mathbf{b}$.

To state our main results conveniently, we give the following definition.

\begin{Definition} {\em Given a set $D$  of positive integers $d_{ij}$, indexed by  $1\leq i\leq m, 1\leq j\leq g_i $, where $m, g_i, i=1,\ldots,m$ are some positive integers.   By} a linear function $L$  with coefficients in $D$ {\em (resp. in $D\cup\{0\})$)  we mean  a linear function on $\mathbb{Z}^m$: $$L(\mathbf{a})=\sum_{i=1}^m\lambda_{i}a_i+\lambda_{0}$$ satisfying  $$\lambda_{i}\in \{d_{i1},\ldots,d_{ig_i}\} \mbox{\quad (resp.  } \lambda_{i}\in \{0, d_{i1},\ldots,d_{ig_i}\} ) $$ for  $ i=1,\ldots,m$. In this case, we also say the linear function } $L$ has coefficients in $D$ {\em (resp. in $D\cup \{0\}$}).
\end{Definition}

Let $f_{i1},\ldots, f_{ig_i}$ be a minimal system of generators of  the ideal $I_i$ for $i=1,\ldots, m$. Denote by $d_{ij}$ the degree of $f_{ij}$ for all possible $i,j$, and let $D$ be the indexed set $\{d_{ij} : 1\leq i\leq m, 1\leq j\leq g_i\}$.  Our main results   show that the function $r(I^{\mathbf{a}}M)$ is given by the supremum of finitely many linear functions with coefficients in $D$ for $\mathbf{a}\gg \mathbf{0}$, while the function $r(M/I^{\mathbf{a}}M)$ is given by the supremum of finitely many linear functions with coefficients in $D\cup \{0\}$ for $\mathbf{a}\gg \mathbf{0}$. Moreover, at least one of these linear functions has coefficients in $D$.  In addition, some corollaries concerning the special cases when $m=1$ or when $d_{i1}=\cdots=d_{i,g_i}$ for $i=1,\ldots,m$ are given.

\section{A generalization of an old theorem }

A result due to Trung and Wang, i.e., \cite[Theorem 2.2]{TW}, plays an important role in the proof of main results of \cite{L}. Denote by  $S:=A[x_1,\ldots,x_n,y_1,\ldots,y_m]$ the polynomial ring over a commutative Noetherian ring $A$ with unity. We may view $S$ as a bi-graded ring with $\deg x_i=(1,0),i=1,\ldots, n$ and $\deg y_j=(d_j,1), j=1,\dots, m,$ where $d_1,\ldots,d_m$ is a sequence of non-negative integers.  Let $\mathcal{U}$ be a finitely generated bi-graded module over $S$. For a fixed number $n$ put
$\mathcal{U}_n:=\bigoplus_{a\in \mathbb{Z}}\mathcal{U}_{(a,n)}.$ Clearly,  $\mathcal{U}_n$ is a finitely generated graded module over the naturally graded ring  $A[x_1,\ldots,x_n]$.  It was proved in \cite[Theorem 2.2]{TW}  that $\mathrm{reg} (\mathcal{U}_n)$ is asymptotically a linear function of $n$ with slope $\leq \max\{d_1,\ldots,d_m\}.$

In this section, we will extend this theorem to the case when $S$ and $\mathcal{U}$ are  multi-graded. For this, we  let $S=K[x_1,\ldots,x_n, z_{ij}, 1\leq i\leq m, 1\leq j\leq g_i]$ be the polynomial ring with grading $\deg x_i=(1,\mathbf{0})$ for $i=1,\ldots,n$ and $\deg z_{ij}=(d_{ij}, \mathbf{e}_i)$ for $1\leq i\leq m, 1\leq j\leq g_i$, where $\mathbf{0}$ is the zero element of $\mathbb{Z}^m$ and $\mathbf{e}_1,\ldots,\mathbf{e}_m$ is the standard basis of $\mathbb{Z}^m$. Let $\mathcal{U}$ be the finitely generated $\mathbb{Z}\times \mathbb{Z}^m$-graded $S$-module. For any $\mathbf{a}\in Z^m$, define $$\mathcal{U}_{\mathbf{a}}=\bigoplus_{i\in \mathbb{Z}}\mathcal{U}_{(i,\mathbf{a})}.$$  Then $\mathcal{U}_{\mathbf{a}}$  is a finitely generated graded $R$-module for all $\mathbf{a}\in \mathbb{Z}^m$,
where $R$ is the polynomial ring $\mathbb{K}[x_1,\ldots,x_n]$ with grading $\deg x_i=1$ for $i=1,\ldots,n$.

The regularity of a finitely generated graded $R$-module $M$ can be defined in two ways. The first one (also called {\it absolute regularity} ) is based on  local cohomology. It is defined by $$\reg(M)=\sup \{i+j: H_{\mathfrak{m}}^i(M)_j\neq 0\},$$
where $\mathfrak{m}$ is the ideal $(x_1,\ldots,x_n)$ of $R$. The other one is defined by Betti numbers: $$\reg(M)=\sup \{j-i: \beta_{i,j}(M)\neq 0\}.$$
On the other hand, Betti numbers can be computed by the Koszul homology: $$\beta_{i,j}(M)=\dim_{\mathbb{K}} H_i(\underline{x},M)_j,$$ where $\underline{x}$  is the sequence $x_1,\ldots,x_n$.  Hence, for all $\mathbf{a}\in \mathbb{Z}^m$,  one has $$\beta_{i,j}(\mathcal{U}_{\mathbf{a}})=\dim_{\mathbb{K}} H_i(\underline{x},\mathcal{U})_{(j,\mathbf{a})}.$$
We refer readers to books \cite{BH} or \cite{HH} for the basic knowledge in this direction.

\begin{Theorem} \label{g}  Let $S,\mathcal{U},R$ be as above. There exist linear functions $L_1,\ldots, L_c$  with coefficients in $D$
 such that $$\mathrm{reg}(\mathcal{U}_{\mathbf{a}})=\max\{L_k(\mathbf{a}): 1\leq k\leq c\} \mbox{\quad for all \quad } \mathbf{a}\gg \mathbf{0}.$$

\end{Theorem}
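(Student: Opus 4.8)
The plan is to reduce the multigraded statement to a statement about Betti numbers of the modules $\mathcal{U}_{\mathbf{a}}$, and then analyze the bigraded structure of the Koszul homology $H_i(\underline{x},\mathcal{U})$ as a module over the subring generated by the $z_{ij}$. Concretely, since $\reg(\mathcal{U}_{\mathbf{a}}) = \sup\{j - i : \beta_{i,j}(\mathcal{U}_{\mathbf{a}}) \neq 0\}$ and $\beta_{i,j}(\mathcal{U}_{\mathbf{a}}) = \dim_{\mathbb{K}} H_i(\underline{x},\mathcal{U})_{(j,\mathbf{a})}$, it suffices to understand, for each fixed homological degree $i$ (there are only finitely many, $0 \le i \le n$), the set of $(j,\mathbf{a})$ for which $H_i(\underline{x},\mathcal{U})_{(j,\mathbf{a})} \neq 0$, and in particular the function $\mathbf{a} \mapsto \max\{j : H_i(\underline{x},\mathcal{U})_{(j,\mathbf{a})} \neq 0\}$. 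The key observation is that the Koszul homology $H_i(\underline{x},\mathcal{U})$ is a finitely generated $\mathbb{Z}\times\mathbb{Z}^m$-graded module over $S/(\underline{x}) \cong \mathbb{K}[z_{ij}]$, which is precisely a polynomial ring of the type appearing in the Trung--Wang setup (generators in degrees $(d_{ij},\mathbf{e}_i)$), except now the second grading is by $\mathbb{Z}^m$ rather than $\mathbb{Z}$.

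The next step is to prove the required structural result for an arbitrary finitely generated $\mathbb{Z}\times\mathbb{Z}^m$-graded module $N$ over $T := \mathbb{K}[z_{ij}]$ with $\deg z_{ij} = (d_{ij},\mathbf{e}_i)$: namely that $\mathbf{a} \mapsto \max\{j : N_{(j,\mathbf{a})} \neq 0\}$ (interpreted as $-\infty$ when the fibre is zero) agrees with the maximum of finitely many linear functions with coefficients in $D$ for $\mathbf{a} \gg \mathbf{0}$. I would approach this by taking a finite multigraded free presentation or, more simply, by using a multigraded filtration of $N$ whose successive quotients are of the form $(T/\mathfrak{p})(-\mathbf{shift})$ for multigraded primes $\mathfrak{p}$; since the top-degree function is the max over the filtration quotients (shifted), and shifting by $(s,\mathbf{t})$ translates the linear function by an affine amount with the same linear part, it reduces to the case $N = T/\mathfrak{p}$. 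For a prime $\mathfrak{p}$, $T/\mathfrak{p}$ is a multigraded domain whose "support" of exponents forms a finitely generated affine semigroup (the image of $\mathbb{N}^{\{(i,j)\}}$ under the degree map, restricted by which $z_{ij} \notin \mathfrak{p}$, modulo the relations in $\mathfrak{p}$), and the function $\mathbf{a} \mapsto \max\{j : (T/\mathfrak{p})_{(j,\mathbf{a})} \neq 0\}$ becomes a piecewise-linear concave-type function on the cone generated by the relevant $\mathbf{e}_i$'s; its linear pieces have slopes that are nonnegative rational combinations of the $d_{ij}$, and a careful argument using the finite generation of the semigroup forces these slopes, for $\mathbf{a} \gg \mathbf{0}$, to be actual values $d_{ij}$ rather than proper combinations. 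This is essentially the content of \cite[Theorem 2.2]{TW} carried out coordinate-wise; I would either cite and adapt their argument or give the semigroup version directly.

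Finally I would assemble the pieces: for each $i \in \{0,\dots,n\}$, apply the structural result to $N = H_i(\underline{x},\mathcal{U})$ to obtain finitely many linear functions $L^{(i)}_k$ with coefficients in $D$ such that $\max\{j : H_i(\underline{x},\mathcal{U})_{(j,\mathbf{a})} \neq 0\} = \max_k L^{(i)}_k(\mathbf{a})$ for $\mathbf{a} \gg \mathbf{0}$; then $\reg(\mathcal{U}_{\mathbf{a}}) = \max_i \big(\max_k L^{(i)}_k(\mathbf{a}) - i\big)$, and subtracting the constant $i$ from an affine function with coefficients in $D$ keeps the coefficients in $D$ (only $\lambda_0$ changes), so the collection $\{L^{(i)}_k - i\}$ is the desired family $L_1,\dots,L_c$. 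One must also check that the finitely many "$\mathbf{a} \gg \mathbf{0}$" thresholds coming from the various $i$ can be combined into a single threshold, which is immediate since there are finitely many.

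The main obstacle is the structural result for multigraded modules over $T$, i.e.\ the genuine generalization of \cite[Theorem 2.2]{TW} from a $\mathbb{Z}$-valued second grading to a $\mathbb{Z}^m$-valued one. In the one-dimensional case Trung and Wang can use a single-variable asymptotic argument; in the $\mathbb{Z}^m$ case one must control a function of several variables $\mathbf{a}$, show it is eventually the max of finitely many linear pieces, and pin down that each piece has its $i$-th slope equal to some $d_{ij}$ (not merely bounded by $\max_j d_{ij}$, nor an irrational or composite slope). The cleanest route is probably to prove that for a multigraded prime $\mathfrak{p}$ not containing any $z_{ij}$ with a given $i$, one can choose a single variable $z_{ij_0}$ that is a nonzerodivisor modulo $\mathfrak{p}$ and "dominates" the $i$-th direction, so that increasing $a_i$ by one raises the top $j$-degree by exactly $d_{ij_0}$ once $\mathbf{a}$ is large; handling the interaction of the different directions $i$ simultaneously — and the primes that do contain some $z_{ij}$, which contribute the coefficient $0$ but are then excluded by the "at least one has coefficients in $D$" clause at the level of the main theorems — is where the bookkeeping is most delicate.
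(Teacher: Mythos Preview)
Your approach is essentially the same as the paper's: pass to Koszul homology, use that each $H_i(\underline{x},\mathcal{U})$ is a finitely generated $\mathbb{Z}\times\mathbb{Z}^m$-graded module over $\mathbb{K}[z_{ij}]$, control the top $j$-degree function $\rho_i(\mathbf{a})$ of that module, and then take $\reg(\mathcal{U}_{\mathbf{a}})=\max_i(\rho_i(\mathbf{a})-i)$. The only difference is that the structural result you flag as the ``main obstacle'' is not reproved in the paper at all but simply invoked as \cite[Lemma~2.1]{BC}; your prime-filtration/semigroup outline is a plausible way to recover that lemma, but it is unnecessary here.
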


\begin{proof}

 Since the finitely generated $S$-module $H_i(\underline{x},\mathcal{U})$ is annihilated by $(\underline{x})$, it is also a finitely generated $\mathbb{Z}\times \mathbb{Z}^m$-graded module over $\mathbb{K}[z_{ij}, 1\leq i\leq m, 1\leq j\leq g_i]$. Set $$\rho_{i}(\mathbf{a})=\max\{j: H_i(\underline{x},\mathcal{U})_{(j,\mathbf{a})}\neq 0\}.$$  It follows by \cite[Lemma 2.1]{BC} that $\rho_{i}(\mathbf{a})$ is the supermum of some linear functions with coefficients in $D$ for $\mathbf{a}\gg \mathbf{0}$.
Therefore, $\reg (\mathcal{U}_{\mathbf{a}})=\max\{\rho_{i}(\mathbf{a})-i: i\geq 0\}$ is also the supermum of some linear functions with coefficients in $D$ for $\mathbf{a}\gg \mathbf{0}$, as required.
\end{proof}

\begin{Remark} \em The idea of  Theorem~\ref{g} comes from \cite{BC}, we just state it explicitly.
\end{Remark}

 Comparing with \cite[Theorem 2.2]{TW}, Theorem~\ref{g} has some advantages.   For one thing, it considers the multi-garded case, which is more general than  the bi-graded case. For another thing, it asserts  that  all the coefficients of the involved linear functions  belong to the set of the first coordinates of multi-degrees of variables, while according to \cite[Theorem 2.2]{TW}, one only knows that the coefficient is  less than or equal to  the maximum of the first coordinates of these multi-degrees.

\section{Asymptotic behaviours}
Let $I_1,\ldots, I_m$ be graded ideals of a standard graded algebra $R$ and $M$ a finitely generated graded $R$-module.
In this section, we will investigate the asymptotic behaviours of multi-functions
$r(I_1^{a_1}\cdots I_m^{a_m}M)$ and $r(M/I_1^{a_1}\cdots I_m^{a_m}M)$.

The reduction number of a graded module is independent of its base rings. More precisely, if the standard graded algebra $R$ is a homomorphic image of the polynomial ring $\mathbb{K}[x_1,\ldots,x_n]$ with the same embedding dimension, then for any finitely generated graded $R$-module $M$,  the reduction number of $M$ as an $R$-module is the same as the reduction number of $M$ as a $\mathbb{K}[x_1,\ldots,x_n]$-module. In the sequel we will assume $R$ is the polynomial ring $\mathbb{K}[x_1,\ldots,x_n]$ with grading $\deg x_i=1$ for $i=1,\ldots, n$.

In the following results, we will also use $I^{\mathbf{a}}$ to denote the product $I_1^{a_1}\cdots I_m^{a_m}$. By convention,  $I^{\mathbf{a}}=R$ if $\mathbf{a}\in \mathbb{Z}^m\setminus \mathbb{N}^m$. As said before,   $I_i, i=1,\ldots,m$ are graded ideals of $R$ and they are  generated in degrees $d_{i1},\ldots,d_{ig_i}, i=1,\ldots,m$ respectively. The indexed set $\{d_{ij}: 1\leq i\leq m, 1\leq j\leq g_i \}$ is  denoted by $D$ again. Let $M$ be a finitely generated $R$-module. We use $\mathrm{a}(M)$ to denote the largest degree of non-vanishing components, that is, $$\mathrm{a}(M)=\max\{n\in \mathbb{Z}: M_n\neq 0\}.$$
It is known that if $\dim (M)=0$,  then $\reg (M)=\mathrm{a}(M)$. Here and hereafter $\dim(\bullet)$ stands for the Krull dimension of a module $\bullet$.

\begin{Lemma}\label{d1}  The dimension function $\dim (I^{\mathbf{a}}M)$ is stationary for $\mathbf{a}\gg\mathbf{0}$.
\end{Lemma}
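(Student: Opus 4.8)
The plan is to reduce the statement to a known finiteness result about associated primes of the multi-Rees algebra. First I would consider the multi-Rees algebra $\mathcal{R} = \mathcal{R}(I_1,\ldots,I_m) = \bigoplus_{\mathbf{a}\in\mathbb{N}^m} I^{\mathbf{a}} t_1^{a_1}\cdots t_m^{a_m}$, which is a finitely generated standard $\mathbb{N}^m$-graded algebra over $R$, and the associated module $\mathcal{M} = \bigoplus_{\mathbf{a}\in\mathbb{N}^m} I^{\mathbf{a}}M\, t_1^{a_1}\cdots t_m^{a_m}$, which is a finitely generated $\mathbb{Z}\times\mathbb{Z}^m$-graded $\mathcal{R}$-module (and hence over a multigraded polynomial ring over $R$, in the style of Section~2). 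The key point is that $\dim(I^{\mathbf{a}}M)$ is determined by $\dim(R/\pp)$ as $\pp$ ranges over the minimal primes of $\Supp(I^{\mathbf{a}}M)$, equivalently over $\Min(\Ann(I^{\mathbf{a}}M))$.

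Next I would invoke the standard fact that the set $\bigcup_{\mathbf{a}} \Ass_R(I^{\mathbf{a}}M)$ is finite; this is the multigraded analogue of Brodmann's theorem on the stability of $\Ass(I^nM)$, and follows from the finite generation of $\mathcal{M}$ over $\mathcal{R}$ together with the fact that an associated prime of a graded piece $\mathcal{M}_{\mathbf{a}}$ lifts to an associated prime of $\mathcal{M}$ over $\mathcal{R}$ (contracted back to $R$). Since $\dim(I^{\mathbf{a}}M) = \max\{\dim(R/\pp) : \pp \in \Supp_R(I^{\mathbf{a}}M)\}$ and $\Supp_R(I^{\mathbf{a}}M)$ has the same minimal elements as $\Ass_R(I^{\mathbf{a}}M)$, the function $\mathbf{a}\mapsto \dim(I^{\mathbf{a}}M)$ takes only finitely many values, namely values among $\{\dim(R/\pp)\}$ for $\pp$ in that finite union. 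Moreover it is easy to check that $\dim(I^{\mathbf{a}}M)$ is monotone non-increasing along each coordinate direction (since $I^{\mathbf{b}}M \subseteq I^{\mathbf{a}}M$ whenever $\mathbf{a}\le\mathbf{b}$ forces $\Supp(I^{\mathbf{b}}M)\subseteq\Supp(I^{\mathbf{a}}M)$, assuming no $I_i$ is nilpotent on $M$; the degenerate cases are handled separately).

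A monotone non-increasing $\mathbb{N}^m\to\mathbb{N}$ function taking finitely many values must be eventually constant: along any coordinate ray it stabilizes, and by a standard Dickson-type / Noetherian induction argument on $m$ one concludes there is a single $\mathbf{b}\in\mathbb{N}^m$ with $\dim(I^{\mathbf{a}}M)$ constant for all $\mathbf{a}\ge\mathbf{b}$. Concretely: the descending family of support-closed-sets $\{\Supp(I^{\mathbf{a}}M)\}$ is eventually stationary by Noetherianity of $\Spec R$, which already gives stationarity of the dimension for $\mathbf{a}\gg\mathbf{0}$.

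The main obstacle I anticipate is making the lifting of associated primes from $\mathcal{M}_{\mathbf{a}}$ to $\mathcal{M}$ clean in the multigraded setting, i.e. correctly establishing the finiteness of $\bigcup_{\mathbf{a}}\Ass_R(I^{\mathbf{a}}M)$; one must be a little careful that we are grading over $\mathbb{N}^m$ rather than $\mathbb{N}$, but the argument of Brodmann/West adapts verbatim once one works over the multigraded polynomial extension of $R$ introduced in Section~2. Alternatively, if one prefers to avoid associated primes entirely, one can argue directly: $\Supp_R(\mathcal{M})$ is a closed subset of $\Spec(R[t_1,\ldots]/\cdots)$, its image behaves well under the projections, and $\Supp_R(I^{\mathbf{a}}M) = \{\pp : (\pp,\mathbf{a})\text{-fiber of }\mathcal{M}\text{ is nonzero}\}$ is eventually constant by generic flatness / Noetherian induction on the multigraded structure. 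Either route gives the claim.
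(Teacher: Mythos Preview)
Your proposal is correct but massively over-engineered compared with the paper's one-line argument. The paper simply observes that $\mathbf{a}\mapsto \dim(I^{\mathbf{a}}M)$ is non-increasing (because $\mathbf{a}\le\mathbf{b}$ implies $I^{\mathbf{b}}M\subseteq I^{\mathbf{a}}M$) and takes values in the well-ordered set $\{0,1,\ldots,\dim M\}$ (or $\{-\infty\}\cup\{0,\ldots,\dim M\}$ in the degenerate case); hence the infimum is attained at some $\mathbf{a}_0$, and for every $\mathbf{a}\ge\mathbf{a}_0$ one has $\dim(I^{\mathbf{a}}M)\le\dim(I^{\mathbf{a}_0}M)=\inf$, forcing equality. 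That is the entire proof.

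Your detour through the multi-Rees module $\mathcal{M}$ and the Brodmann-type finiteness of $\bigcup_{\mathbf{a}}\Ass_R(I^{\mathbf{a}}M)$ is not needed: the only role it plays in your argument is to deduce that the dimension function takes finitely many values, but that is already immediate from $0\le\dim(I^{\mathbf{a}}M)\le\dim M$. Indeed, you already record the monotonicity halfway through your proposal; if you isolate that sentence and drop everything about associated primes, Rees algebras, and generic flatness, you recover exactly the paper's proof. The extra machinery you bring in is genuine mathematics (and would be relevant if one wanted, say, stability of $\Ass_R(I^{\mathbf{a}}M)$ itself), but for the bare statement about Krull dimension it buys nothing.
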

\begin{proof} This is because this function is non-increasing and always non-negative.
\end{proof}

\begin{Theorem} \label{T1} There exist linear functions $L_1,\ldots, L_c$ on $\mathbb{Z}^m$ with coefficients in $D$
such that $$r(I^{\mathbf{a}}M)=\max\{L_k(\mathbf{a}): 1\leq k\leq m\} \mbox{\quad for all \quad } \mathbf{a}\gg \mathbf{0}.$$
\end{Theorem}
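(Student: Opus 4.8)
The plan is to reduce the statement about reduction numbers of the modules $I^{\mathbf{a}}M$ to a statement about regularity of certain multi-graded modules, so that Theorem~\ref{g} can be applied directly. First I would set up the multi-Rees-type algebra: let $T$ be the polynomial ring $R[z_{ij} : 1\le i\le m, 1\le j\le g_i]$, graded by giving $z_{ij}$ the multidegree $(d_{ij},\mathbf{e}_i)$ (and the variables of $R$ the degree $(1,\mathbf{0})$), and consider the $\mathbb{Z}\times\mathbb{Z}^m$-graded module obtained from the multi-Rees module $\bigoplus_{\mathbf{a}\in\mathbb{N}^m} I^{\mathbf{a}}M$, where the summand indexed by $\mathbf{a}$ sits in $\mathbb{Z}^m$-degree $\mathbf{a}$. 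The surjection $T\to \mathcal{R}(I_1,\dots,I_m)$ sending $z_{ij}\mapsto f_{ij}$ makes this into a finitely generated $\mathbb{Z}\times\mathbb{Z}^m$-graded $T$-module, and the degree-$\mathbf{a}$ slice in the sense of Section~2 is exactly $I^{\mathbf{a}}M$ as a graded $R$-module. Hence Theorem~\ref{g} gives that $\reg(I^{\mathbf{a}}M)$ is, for $\mathbf{a}\gg\mathbf{0}$, the maximum of finitely many linear functions with coefficients in $D$.

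Next I would pass from regularity to the reduction number. The key input here is the main result of \cite{L}, or rather the elementary relationship behind it: for a finitely generated graded $R$-module $N$ over the polynomial ring, a generic linear form behaves as a "filter-regular" element, a sequence of $\dim N$ generic linear forms $\ell_1,\dots,\ell_d$ generates a minimal $N$-reduction, and $r(N)=r_{(\ell_1,\dots,\ell_d)}(N)$ can be read off from $N/(\ell_1,\dots,\ell_d)N$ up to the contribution of $H^0_{\mathfrak m}$; concretely one has a formula of the shape $r(N)=\max\{\mathrm{a}(N/(\underline{\ell})N),\, \text{something bounded by }\reg N\}$, and in any case $r(N)\le\reg(N)$ while $r(N)$ and $\reg(N)$ have been shown in \cite{L} to share the same asymptotic slope in the one-ideal case. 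For the multigraded statement I would use the description of $r(I^{\mathbf{a}}M)$ in terms of the Hilbert function of the Artinian reduction: after cutting down by $\dim$ generic linear forms (which can be chosen uniformly for $\mathbf{a}\gg\mathbf{0}$ since $\dim I^{\mathbf{a}}M$ is eventually constant by Lemma~\ref{d1}, and the generic choice is governed by finitely many associated primes that also stabilize), $r(I^{\mathbf{a}}M)=\mathrm{a}\bigl((I^{\mathbf{a}}M)/(\underline{\ell})(I^{\mathbf{a}}M)\bigr)=\reg\bigl((I^{\mathbf{a}}M)/(\underline{\ell})(I^{\mathbf{a}}M)\bigr)$, the last equality because that module is Artinian. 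Now $(I^{\mathbf{a}}M)/(\underline{\ell})(I^{\mathbf{a}}M)$ is again the $\mathbf{a}$-slice of a finitely generated $\mathbb{Z}\times\mathbb{Z}^m$-graded module over the appropriate $T$ (namely the multi-Rees module of $M/(\underline{\ell})M$, or a Koszul-homology variant), so a second application of Theorem~\ref{g} yields that $r(I^{\mathbf{a}}M)$ is eventually the maximum of finitely many linear functions with coefficients in $D$.

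The main obstacle I anticipate is making the choice of the cutting-down linear forms $\ell_1,\dots,\ell_d$ genuinely uniform in $\mathbf{a}$, and relating $r_{(\underline{\ell})}(I^{\mathbf{a}}M)$ to $r(I^{\mathbf{a}}M)$ (minimality of the reduction) simultaneously for all large $\mathbf{a}$. One needs that a sufficiently generic linear sequence is a minimal reduction of $I^{\mathbf{a}}M$ for \emph{every} $\mathbf{a}\gg\mathbf{0}$ at once; this should follow from the fact that the relevant genericity condition is an open condition whose failure locus is controlled by $\Ass$ of the Rees module, which is finite, combined with the stabilization of $\dim I^{\mathbf{a}}M$. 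A cleaner route, and probably the one to take, is to avoid choosing $\underline\ell$ by hand and instead work with the formula $r(N)=\reg(N/H^0_{\mathfrak m}(N))$-type identities together with the fact, established in \cite{L}, that $r$ of a module differs from $r$ of its Artinian quotient in a controlled way; then the asymptotic behaviour of $r(I^{\mathbf{a}}M)$ is pinned down by that of the regularity of the slices $(I^{\mathbf{a}}M)$ and $(I^{\mathbf{a}}M)/H^0_{\mathfrak m}(I^{\mathbf{a}}M)$, both covered by Theorem~\ref{g}. The remaining bookkeeping—that the maximum of the resulting linear functions can be taken over an index set of size $c$, re-labelled so that the displayed bound reads $1\le k\le c$ (the statement's "$1\le k\le m$" should read "$1\le k\le c$")—is routine.
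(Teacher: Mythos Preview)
Your primary route is essentially the paper's: stabilize $\dim I^{\mathbf{a}}M$ via Lemma~\ref{d1}, pick a single ideal $J$ of generic linear forms that serves as a minimal reduction of $I^{\mathbf{a}}M$ for all $\mathbf{a}\gg\mathbf{0}$, write $r(I^{\mathbf{a}}M)=\mathrm{a}(I^{\mathbf{a}}M/JI^{\mathbf{a}}M)=\reg(I^{\mathbf{a}}M/JI^{\mathbf{a}}M)$, and apply Theorem~\ref{g} to the finitely generated $\mathbb{Z}\times\mathbb{Z}^m$-graded $S$-module $\bigoplus_{\mathbf{a}}I^{\mathbf{a}}M/JI^{\mathbf{a}}M$. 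The paper simply points to \cite[Theorem~2.7]{L} and says the same argument goes through with Theorem~\ref{g} in place of \cite[Theorem~2.2]{TW}; the ``main obstacle'' you flag---uniformity of $J$---is exactly what \cite[Remark~2.4]{L} supplies, so you can cite it rather than reprove it.

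Two minor points. Your first paragraph, computing $\reg(I^{\mathbf{a}}M)$ itself via Theorem~\ref{g}, is a detour: the argument never uses that quantity, only the regularity of the Artinian slices $I^{\mathbf{a}}M/JI^{\mathbf{a}}M$. And your parenthetical identification of $\bigoplus_{\mathbf{a}} I^{\mathbf{a}}M/JI^{\mathbf{a}}M$ with ``the multi-Rees module of $M/(\underline{\ell})M$'' is not quite right (those differ in general); what you want is simply that it is the quotient $(\bigoplus_{\mathbf{a}}I^{\mathbf{a}}M)\otimes_R R/J$, hence finitely generated over $S$. The alternative $r(N)=\reg(N/H^0_{\mathfrak m}(N))$ route you float at the end is not a standard identity and would need justification; stick with the first approach.
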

\begin{proof} This result can follow in the same way as \cite[Theorem 2.7]{L} by Lemma ~\ref{d1} and by replacing  \cite[Theorem 2.2]{TW} with Theorem~\ref{g}.
\end{proof}

\begin{Lemma} \label{d2} The dimension function $\dim (M/I^{\mathbf{a}}M)$ is stationary for $\mathbf{a}\geq \mathbf{1}$, where $\mathbf{1}$ is the vector $(1,\ldots,1)\in \mathbb{N}^m.$

\end{Lemma}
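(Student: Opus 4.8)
The plan is to show that $\dim(M/I^{\mathbf{a}}M)$ stabilizes by reducing to the case $m=1$, which follows from well-known results on the dimension of $M/I^nM$, and then gluing the information across the $m$ directions using the partial order.

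First I would reduce everything to the associated primes. Recall that $\dim(M/I^{\mathbf{a}}M) = \max\{\dim(R/\pp) : \pp \in \Supp(M/I^{\mathbf{a}}M)\}$, and $\Supp(M/I^{\mathbf{a}}M) = \Supp(M) \cap V(I^{\mathbf{a}} + \Ann M) = \Supp(M) \cap V(I_1 \cdots I_m + \Ann M)$ as long as every $a_i \geq 1$, since $\Rad(I^{\mathbf{a}}) = \Rad(I_1 \cdots I_m)$ whenever $\mathbf{a} \geq \mathbf{1}$. This already does it: for all $\mathbf{a} \geq \mathbf{1}$ the support of $M/I^{\mathbf{a}}M$ is literally the \emph{same} set, namely $\Supp(M) \cap V(I_1\cdots I_m)$, hence the dimension is constant. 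So the key observation is simply that the radical of $I^{\mathbf{a}}$ does not depend on $\mathbf{a}$ once $\mathbf{a} \geq \mathbf{1}$.

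To spell out the steps in order: (1) observe $\sqrt{I_1^{a_1}\cdots I_m^{a_m}} = \sqrt{I_1 \cdots I_m} = \bigcap_{i=1}^m \sqrt{I_i}$ for every $\mathbf{a} \geq \mathbf{1}$; (2) deduce $V(I^{\mathbf{a}} + \Ann M) = V(I_1\cdots I_m + \Ann M)$ is independent of $\mathbf{a} \geq \mathbf{1}$; (3) since $\Supp(M/I^{\mathbf{a}}M) = \Supp(M/(I_1\cdots I_m)M)$ is then independent of $\mathbf{a}\geq\mathbf{1}$, and Krull dimension of a finitely generated module is the supremum of $\dim R/\pp$ over its support, conclude $\dim(M/I^{\mathbf{a}}M)$ is constant for $\mathbf{a} \geq \mathbf{1}$.

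There is no real obstacle here; the statement is essentially a bookkeeping consequence of the radical being insensitive to exponents, in contrast with Lemma~\ref{d1} where one genuinely needs the monotonicity/boundedness argument because $\dim(I^{\mathbf{a}}M)$ can strictly decrease for small $\mathbf{a}$. The only point worth care is to make sure the argument uses $\mathbf{a}\geq\mathbf{1}$ and not merely $\mathbf{a}\gg\mathbf{0}$: the identity $\sqrt{I^{\mathbf{a}}}=\sqrt{I_1\cdots I_m}$ holds already at $\mathbf{a}=\mathbf{1}$, so no further passage to large exponents is needed, which is exactly the sharper conclusion claimed in the lemma.
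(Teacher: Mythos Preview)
Your argument is correct and is essentially the paper's own proof: the paper records the single line $\sqrt{\Ann(M/I^{\mathbf{a}}M)}=\sqrt{\Ann(M)+I_1\cdots I_m}$ for $\mathbf{a}\geq\mathbf{1}$, which is exactly your observation that the support (equivalently, the radical of the annihilator) is insensitive to the exponents. Your opening sentence about reducing to $m=1$ and gluing is a red herring you immediately and rightly abandon; the direct radical/support argument you give is the whole proof.
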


\begin{proof} This follows from the equality $\sqrt{\mbox{Ann}(M/I^{\mathbf{a}}M)}=\sqrt{\mathrm{Ann}(M)+I_1\cdots I_m}$ for any $\mathbf{a}\geq \mathbf{1}$.
\end{proof}

\begin{Theorem} \label{T2} There exist  linear functions
$L_{k}, 1\leq k\leq c$, with coefficients in $D\cup \{0\}$
such that $$r(M/I^{\mathbf{a}}M)=\max\{L_k(\mathbf{a}):1\leq k\leq c\}$$
for all $\mathbf{a}\gg \mathbf{0}$. Moreover, at least one of these linear functions has coefficients in $D$.

\end{Theorem}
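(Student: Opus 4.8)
The plan is to reduce the statement about $r(M/I^{\mathbf{a}}M)$ to the already-proven Theorem~\ref{T1} together with Theorem~\ref{g}, exactly as Theorem~\ref{T1} itself was reduced to \cite[Theorem 2.7]{L}. First I would recall from Lemma~\ref{d2} that $\dim(M/I^{\mathbf{a}}M)$ is a constant, say $t$, for all $\mathbf{a}\geq\mathbf{1}$; this is what makes an asymptotic statement reasonable, since the reduction number is controlled by regularity once one has a fixed-dimension family. The key homological input is to realize the modules $M/I^{\mathbf{a}}M$ as the multigraded pieces $\mathcal{U}_{\mathbf{a}}$ of a single finitely generated module $\mathcal{U}$ over the polynomial ring $S=\mathbb{K}[x_1,\dots,x_n,z_{ij}]$ of Section~2. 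Concretely, one takes the multi-Rees-type algebra $\mathcal{R}=\bigoplus_{\mathbf{a}\in\mathbb{N}^m} I^{\mathbf{a}}$ (a quotient of $S$ via $z_{ij}\mapsto f_{ij}$), forms the $\mathcal{R}$-module $\bigoplus_{\mathbf{a}} I^{\mathbf{a}}M$, and then the module $\mathcal{V}=\bigoplus_{\mathbf{a}}(M/I^{\mathbf{a}}M)$ fits into an exact sequence of finitely generated $\mathbb{Z}\times\mathbb{Z}^m$-graded $S$-modules
\begin{equation*}
0\To \bigoplus_{\mathbf{a}} I^{\mathbf{a}}M \To \bigoplus_{\mathbf{a}} M \To \mathcal{V}\To 0,
\end{equation*}
where the middle term is $M\otimes_{\mathbb{K}}\mathbb{K}[z_{ij}]$ placed in the appropriate multidegrees.

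Next I would apply Theorem~\ref{g} to the three terms of this sequence. For the right-hand and middle terms the regularity is visibly $\mathrm{reg}(M)$ plus a linear function of $\mathbf{a}$ coming from the shifts $\deg z_{ij}=(d_{ij},\mathbf{e}_i)$, so $\mathrm{reg}$ of those pieces is the supremum of linear functions with coefficients in $D\cup\{0\}$; for the left-hand term it is the supremum of linear functions with coefficients in $D$ by Theorem~\ref{g} (or directly by Theorem~\ref{T1} via the regularity--reduction-number comparison in the fixed-dimension case). The long exact sequence in local cohomology then sandwiches $\mathrm{reg}(M/I^{\mathbf{a}}M)$ between expressions each of which is a supremum of finitely many linear functions with coefficients in $D\cup\{0\}$; combined with the fact that $\dim(M/I^{\mathbf{a}}M)=t$ is constant, this forces $\mathrm{reg}(M/I^{\mathbf{a}}M)$, and hence $r(M/I^{\mathbf{a}}M)$, to agree with such a supremum for $\mathbf{a}\gg\mathbf{0}$ — here one uses, as in \cite[Theorem 2.7]{L}, that for a family of fixed dimension the reduction number and the $\mathrm{a}$-invariant/regularity differ by a bounded amount, and the precise value is again piecewise linear by the Bruns--Conca lemma \cite[Lemma 2.1]{BC}.

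For the "moreover" clause — that at least one of the $L_k$ has coefficients in $D$ (no zero coefficients allowed) — I would argue by degree-counting in a single variable at a time. Fix $i$ and consider increasing only $a_i$ while keeping the other $a_j$ fixed. Since $I_i$ is generated in degrees $\geq \min_j d_{ij}>0$, multiplying by $I_i$ raises the generating degrees of $I^{\mathbf{a}}M$, and one shows $r(M/I^{\mathbf{a}}M)\geq r(I^{\mathbf{a}}M)$ whenever $\dim(I^{\mathbf{a}}M)$ and $\dim(M/I^{\mathbf{a}}M)$ are both positive, or more directly that the top nonvanishing degree of $M/I^{\mathbf{a}}M$ is at least that of $I^{\mathbf{a}}M/ I^{\mathbf{a}+\mathbf{e}_i}M$, which grows with slope at least $\min_j d_{ij}$ in $a_i$. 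Thus the dominant linear function as $\mathbf{a}\to\infty$ along every coordinate direction has strictly positive $i$-th coefficient lying in $\{d_{i1},\dots,d_{ig_i}\}$; the function achieving the maximum in all directions simultaneously for $\mathbf{a}\gg\mathbf{0}$ — which exists because the max is taken over finitely many linear functions — then has all coefficients in $D$.

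I expect the main obstacle to be the "moreover" part: extracting a single linear function with all coefficients in $D$ from the finite family requires a uniform comparison between $r(M/I^{\mathbf{a}}M)$ and $r(I^{\mathbf{a}}M)$ (or between the respective $\mathrm{a}$-invariants) that is valid for all large $\mathbf{a}$ and not just along rays, so one must be careful that the linear function maximizing along each axis is actually the global maximizer, using convexity of the upper envelope together with the lower bound $r(M/I^{\mathbf{a}}M)\geq r(I^{\mathbf{a}}M)$ which itself needs the observation that $I^{\mathbf{a}}M\subseteq M$ and that the cokernel's top degree is no smaller. The passage through local cohomology and the bookkeeping of which shifts contribute $0$ versus $d_{ij}$ is routine once the exact sequence above is set up.
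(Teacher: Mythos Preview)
Your central step does not work: the module $\mathcal{V}=\bigoplus_{\mathbf{a}}M/I^{\mathbf{a}}M$ is \emph{not} a finitely generated $S$-module, and the short exact sequence you write down is not a sequence of $S$-modules. If the middle term is $M\otimes_{\mathbb K}\mathbb K[z_{ij}]$ with $z_{ij}$ acting by pure shift, then it is indeed finitely generated, but the inclusion $\bigoplus_{\mathbf{a}}I^{\mathbf{a}}M\hookrightarrow\bigoplus_{\mathbf{a}}M$ is not $S$-linear: on the Rees side $z_{ij}$ sends $m\in I^{\mathbf{a}}M$ to $f_{ij}m$, while on the polynomial side it sends the image $m$ to $m$ in the next multidegree. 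If instead you give the middle term the compatible action $z_{ij}\cdot m=f_{ij}m$, then the submodule generated by the degree-$\mathbf{0}$ piece is exactly $\bigoplus_{\mathbf{a}}I^{\mathbf{a}}M$, so neither $\bigoplus_{\mathbf{a}}M$ nor its quotient $\mathcal{V}$ is finitely generated over $S$ (already for $m=1$, $I=(x)\subset R=\mathbb K[x]$, the module $\bigoplus_{a}R/(x^{a})$ needs a new generator in every degree). Hence Theorem~\ref{g} simply does not apply to $\mathcal{V}$, and the ``sandwich via the long exact sequence in local cohomology'' cannot get off the ground.

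This obstruction is exactly what the paper's proof is designed to avoid. Instead of packaging all of $M/I^{\mathbf{a}}M$ at once, the paper first fixes a single reduction $J$ (using Lemma~\ref{d2} and \cite[remark~2.4]{L}) so that $r(M/I^{\mathbf{a}}M)=\mathrm{a}\bigl(M/(JM+I^{\mathbf{a}}M)\bigr)$, and then telescopes along one coordinate at a time via the successive quotients
\[
\mathcal{U}^{i}_{\mathbf{a}}=\frac{I^{\mathbf{a}-\mathbf{e}_i}M+JM}{I^{\mathbf{a}}M+JM}.
\]
Each $\mathcal{U}^{i}=\bigoplus_{\mathbf{a}}\mathcal{U}^{i}_{\mathbf{a}}$ \emph{is} a quotient of the (shifted) multi-Rees module, hence finitely generated over $S$, so Theorem~\ref{g} applies and gives $\mathrm{a}(\mathcal{U}^{i}_{\mathbf{a}})$ as a maximum of linear functions with coefficients in $D$. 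The recursion $r(M/I^{\mathbf{b}}M)=\max\{r(M/I^{\mathbf{b}-\mathbf{e}_i}M),\mathrm{a}(\mathcal{U}^{i}_{\mathbf{b}})\}$ then unwinds to the desired maximum, and the ``moreover'' clause falls out for free: the last batch of functions $L_{mk}(a_1,\dots,a_m)$, coming from $\mathcal{U}^{m}$ with all coordinates still live, automatically has every coefficient in $D$. No inequality $r(M/I^{\mathbf{a}}M)\geq r(I^{\mathbf{a}}M)$ and no convexity argument is needed.
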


\begin{proof} In view of \cite[remark 2.4]{L} and by Lemma~\ref{d2}, we may assume that there exists a graded ideal $J$ of $R$ such that $$r(M/I^{\mathbf{a}}M)=r_J(M/I^{\mathbf{a}}M)=\mathrm{a}(M/JM+I^{\mathbf{a}}M)$$ for all  $\mathbf{1}\leq \mathbf{a}\in \mathbb{N}^m,$
where the last equality follows from Definition~\ref{dreduction}.

Let $\mathbf{e}_1,\ldots,\mathbf{e}_m$ be the standard basis of $\mathbb{Z}^m$. For any $\mathbf{a}\in \mathbb{N}^m$ and $1\leq i\leq m$, we set $$\mathcal{U}_{\mathbf{a}}^i=I^{\mathbf{a}-\mathbf{e_i}}M+JM/I^{\mathbf{a}}M+JM$$ and set $$\mathcal{U}^i=\bigoplus_{\mathbf{a}\in \mathbb{N}^m} \mathcal{U}_{\mathbf{a}}^i.$$   Then $\mathcal{U}^i$ is a $\mathbb{Z}\times \mathbb{Z}^m$-graded $S$-module  for   $i=1,\ldots,m$, where $S$ is the multi-graded polynomial ring $K[x_1,\ldots,x_n, z_{ij}, 1\leq i\leq m, 1\leq j\leq g_i]$, as defined in Section 2. By Theorem~\ref{g} there exists a vector $\mathbf{1}\leq \mathbf{v}=(v_1,\ldots,v_m)\in \mathbb{N}^m$ and linear functions $L_{ik}(\mathbf{a}), 1\leq i\leq m, 1\leq k\leq c_i$ with coefficients in $D$ such that for all $1\leq i\leq m$ and for all $\mathbf{a}\geq \mathbf{v}$, one has
\begin{equation} \label{(1)} \mathrm{a}(\mathcal{U}_{\mathbf{a}}^i)=\reg (\mathcal{U}_{\mathbf{a}}^i)=\max\{L_{ik}(\mathbf{a}): 1\leq k\leq c_i\}.
 \end{equation}

 On the other hand, from the short exact sequences $$0\rightarrow \mathcal{U}^i_{\mathbf{b}}\rightarrow M/I^{\mathbf{b}}M+JM\rightarrow M/I^{\mathbf{b}-\mathbf{e}_i}M+JM\rightarrow 0,$$ where $\mathbf{b}\in \mathbb{N}^m$ and $1\leq i\leq m$,  it follows that
 \begin{equation} \label{t}  r(M/I^{\mathbf{b}}M)=\max\{r(M/I^{\mathbf{b}-\mathbf{e}_i}M), \mathrm{a}(\mathcal{U}^i_{\mathbf{b}})\}
 \end{equation}
 for any $\mathbf{b}\in \mathbb{N}^m$ and $1\leq i\leq m$ with $\mathbf{b}-\mathbf{e}_i\geq \mathbf{1}.$

 Using the equality  (\ref{t}) repeatedly with suitable $\mathbf{b}$'s and $i$'s, we obtain:
 \begin{equation}\label{(2)}
 \begin{aligned}
 r(M/I^{\mathbf{a}}M) &=\max\{r(M/I^{\mathbf{v}}M), \mathrm{a}(\mathcal{U}_{\mathbf{v}+\mathbf{e}_1}^1),\ldots,  \mathrm{a}(\mathcal{U}_{\mathbf{v}+(a_1-v_1)\mathbf{e_1}}^1), \ldots, \mathrm{a}(\mathcal{U}_{\mathbf{a}}^m)  \}\\
   &=\max\{\mathrm{a}(\mathcal{U}_{\mathbf{v}+\mathbf{e}_1}^1),\ldots,  \mathrm{a}(\mathcal{U}_{\mathbf{v}+(a_1-v_1)\mathbf{e_1}}^1), \ldots, \mathrm{a}(\mathcal{U}_{\mathbf{a}}^m) \}
 \end{aligned}
  \end{equation}
  for all $\mathbf{a}\gg \mathbf{0}$

  Combining  (\ref{(1)}) with (\ref{(2)}), it follows that $$r(I^{\mathbf{a}}M)=\max\{L_{ik}(a_1,\ldots,a_i, v_{i+1},\ldots,v_m): 1\leq i\leq m, 1\leq k\leq c_i\}$$ for all $\mathbf{a}\gg \mathbf{0}.$
  Note that $L_{ik}(a_1,\ldots,a_i, v_{i+1},\ldots,v_m)$ is a linear function of $\mathbf{a}$ with coefficients in $D\cup \{0\}$
  for each $i,k$, our desired equality follows.

  Finally,   the last statement of this theorem  follows since $L_{mk}(a_1,\ldots,a_m)$  are  linear functions of $\mathbf{a}$ with coefficients in $D$ for $1\leq k\leq c_k$.
\end{proof}

Restricting  Theorems~\ref{T1} and \ref{T2} to the case when $m=1$, we obtain the following result.

\begin{Corollary}

  Let $I$ be a graded ideal of $R$ generated in degrees $d_1,\ldots,d_r$. Then $r(I^nM)$  and $r(M/I^nM)$ are both linear functions for $n\gg 0$ whose slopes belong to $\{d_1,\ldots,d_r\}$.

\end{Corollary}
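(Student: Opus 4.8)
The plan is to specialize Theorems~\ref{T1} and \ref{T2} to $m=1$ and check that the ``supremum of finitely many linear functions'' collapses to a single linear function in one variable. First I would invoke Theorem~\ref{T1} with $m=1$: there exist linear functions $L_1,\ldots,L_c$ on $\mathbb{Z}$ with coefficients in $D=\{d_1,\ldots,d_r\}$ such that $r(I^nM)=\max\{L_k(n):1\le k\le c\}$ for $n\gg0$. Similarly, Theorem~\ref{T2} gives linear functions on $\mathbb{Z}$ with coefficients in $D\cup\{0\}$ computing $r(M/I^nM)$ as a maximum for $n\gg0$, at least one of which has slope in $D$.

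The key observation is that the pointwise maximum of finitely many affine functions of a single real variable is eventually affine: for $n$ large enough, exactly those $L_k$ of maximal slope dominate, and among those the one with the largest constant term wins for all large $n$. So $r(I^nM)$ agrees with a single $L_k$ for $n\gg0$, and its slope is that $L_k$'s slope, which lies in $\{d_1,\ldots,d_r\}$ by Theorem~\ref{T1}. The same argument applies to $r(M/I^nM)$; a priori its eventual slope lies in $D\cup\{0\}$, so I need the extra input that at least one dominating function has slope in $D$. Here I would argue that the function $r(M/I^nM)$ is non-decreasing in $n$ (since $I^{n+1}M\subseteq I^nM$ gives a surjection $M/I^{n+1}M\twoheadrightarrow M/I^nM$, hence $r(M/I^nM)\le r(M/I^{n+1}M)$ by the standard behaviour of reduction numbers under quotients, cf.\ the exact sequence argument in the proof of Theorem~\ref{T2}), and it is unbounded whenever $I$ is not nilpotent on $M$; combined with the fact that the linear function of slope in $D$ produced in Theorem~\ref{T2} is a genuine lower bound for $r(M/I^nM)$ at large $n$, this forces the eventual slope to be positive, i.e.\ in $\{d_1,\ldots,d_r\}$ rather than $0$. (In the degenerate case where $I^nM=0$ for $n\gg0$, the statement is vacuous or trivial.)

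I expect the only real subtlety to be this last point: ruling out that $r(M/I^nM)$ stabilizes to a constant (slope $0$) while the $D$-slope function from Theorem~\ref{T2} is genuinely present but not dominant. The clean resolution is that a dominant affine function in one variable is unique up to eventual agreement, so if \emph{some} member of the family has positive slope and the maximum is eventually constant, that member cannot be the maximum; hence the $D$-slope member being present forces, together with monotonicity and unboundedness of $r(M/I^nM)$, the maximum itself to have slope in $D$. Everything else is the elementary fact about maxima of affine functions on $\mathbb{Z}$ plus direct quotation of Theorems~\ref{T1} and \ref{T2}, so no lengthy computation is needed.
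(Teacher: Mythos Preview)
Your approach is the same as the paper's: specialize Theorems~\ref{T1} and \ref{T2} to $m=1$ and use that a maximum of finitely many affine functions of one variable is eventually affine. The paper states the corollary as an immediate restriction and gives no further argument.

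Your proof is correct, but you overcomplicate the exclusion of slope~$0$ for $r(M/I^nM)$. The monotonicity claim and the ``unbounded whenever $I$ is not nilpotent on $M$'' detour are unnecessary (and the latter you do not actually prove). The clean argument is already contained in what you wrote: Theorem~\ref{T2} guarantees that at least one $L_{k_0}$ in the family has slope in $D$, and by Definition~1.2 the elements of $D$ are positive integers. Hence
\[
r(M/I^nM)=\max_k L_k(n)\ \ge\ L_{k_0}(n)\ \longrightarrow\ \infty,
\]
so the eventual slope of the maximum cannot be $0$. Since that eventual slope equals the largest slope among the $L_k$, which lies in $D\cup\{0\}$, it must lie in $D$. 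No appeal to surjections, monotonicity of reduction numbers, or nilpotence is needed.
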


\begin{Remark}  \em According to \cite[Theorem 2.7]{L},  $r(I^nM)$ is asymptotically a linear function   with $\rho_I(M)$ as its slope. Thus, $\rho_I(M)\in \{d_1,\ldots, d_r\}$. Here $\rho_I(M)$ is defined to be
$$\rho_I(M)=\min\{ D(J): JI^{n-1}M=I^nM \mbox{ and  } J\subseteq I  \mbox{ for some } n>0  \},$$ where $D(J)$ is the maximal degree of minimal generators of a graded ideal $J$.

\end{Remark}

The following example shows that the slopes of $r(I^nM)$ and $r(M/I^nM)$ could be any of the numbers  $d_1,\ldots,d_r$.

\begin{Example} \em Let $I$  be the ideal  $(x_1^{d_1},\ldots,x_r^{d_r})$ of $R=K[x_1,\ldots,x_r]$ and set $$M_i=R/(x_1,\ldots, x_{i-1}, x_{i+1},\ldots,x_r)\mbox{\quad for \quad} i=1,\ldots, r.$$ Then $$I^nM_i=\frac{x_i^{nd_i}+(x_1,\ldots, x_{i-1}, x_{i+1},\ldots,x_r)}{(x_1,\ldots, x_{i-1}, x_{i+1},\ldots,x_r)},$$
and $$M_i/I^nM_i\cong \mathbb{K}[x_i]/(x_i^{nd_i}).$$
From these it follows that both $r(I^nM_i)$ and $r(M_i/I^nM_i)$ have $d_i$ as their slopes. Here, we use the fact that if $M$ is a graded module of dimension zero then the zero ideal is the unique minimal $M$-reduction.
\end{Example}
Another immediate consequence of Theorems~\ref{T1} and \ref{T2} is the following.

\begin{Corollary}
  If, for each $i\in \{1,\ldots,m\}$,  $I_i$ is generated in a single degree, say $d_i$,  then there exist integers $\epsilon_1$ and $\epsilon_2$ such that $r(I^{\mathbf{a}}M)=d_1a_1+\cdots+d_ma_m +\epsilon_1$ and $r(M/I^{\mathbf{a}}M)=d_1a_1+\cdots+d_ma_m +\epsilon_2$ for all $\mathbf{a}\gg \mathbf{0}$.
\end{Corollary}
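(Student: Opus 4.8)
The plan is to derive this Corollary directly from Theorems~\ref{T1} and \ref{T2} by exploiting the hypothesis that each $I_i$ is generated in a single degree $d_i$. Under this hypothesis the indexed set $D$ consists of the single value $d_i$ in each row $i$, so a linear function $L$ with coefficients in $D$ is forced to have the shape $L(\mathbf{a})=d_1a_1+\cdots+d_ma_m+\lambda_0$: the only admissible coefficient of $a_i$ is $d_i$ itself, and $\lambda_0\in\mathbb{Z}$ is a constant term. Thus every one of the finitely many linear functions $L_1,\ldots,L_c$ supplied by Theorem~\ref{T1} has this form, differing only in its constant term.

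First I would invoke Theorem~\ref{T1} to write $r(I^{\mathbf{a}}M)=\max\{L_k(\mathbf{a}):1\leq k\leq c\}$ for $\mathbf{a}\gg\mathbf{0}$, with each $L_k$ having coefficients in $D=\{d_1,\ldots,d_m\}$. By the observation above, $L_k(\mathbf{a})=d_1a_1+\cdots+d_ma_m+\epsilon^{(k)}$ for suitable integers $\epsilon^{(k)}$. Setting $\epsilon_1=\max\{\epsilon^{(1)},\ldots,\epsilon^{(c)}\}$, the maximum of the $L_k$ is attained by whichever $L_k$ has the largest constant term, uniformly in $\mathbf{a}$, so $r(I^{\mathbf{a}}M)=d_1a_1+\cdots+d_ma_m+\epsilon_1$ for $\mathbf{a}\gg\mathbf{0}$. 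The argument for $r(M/I^{\mathbf{a}}M)$ is essentially the same, using Theorem~\ref{T2}, but with a small wrinkle: that theorem only guarantees coefficients in $D\cup\{0\}$, so a priori some $L_k$ might have coefficient $0$ (rather than $d_i$) in the $a_i$-slot. However, Theorem~\ref{T2} also asserts that at least one of the linear functions has all its coefficients in $D$, hence is of the form $d_1a_1+\cdots+d_ma_m+\epsilon'$. Since every other $L_k$ has coefficients in $D\cup\{0\}$, it is pointwise dominated — for $\mathbf{a}\gg\mathbf{0}$ — by that distinguished function, because replacing any coefficient $0$ by the positive value $d_i$ only increases the function once $a_i$ is large. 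Therefore the maximum is eventually realized by (the largest-constant-term) genuinely-$D$-valued linear function, giving $r(M/I^{\mathbf{a}}M)=d_1a_1+\cdots+d_ma_m+\epsilon_2$ for $\mathbf{a}\gg\mathbf{0}$.

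I do not anticipate a serious obstacle here — the Corollary is a formal specialization. The only point requiring a moment of care is the domination argument in the quotient case: one must note that finitely many linear functions, each either the distinguished one or one with some vanishing coefficients, are compared on the region $\mathbf{a}\geq\mathbf{b}$ for suitably large $\mathbf{b}$, and that on such a region the distinguished function wins because each coordinate-wise gap $d_ia_i-0\cdot a_i=d_ia_i$ grows without bound. One should take $\mathbf{b}$ large enough that this dominance holds simultaneously over all the $L_k$ and large enough that the conclusions of Theorems~\ref{T1} and \ref{T2} are in force; the existence of such $\mathbf{b}$ is immediate from finiteness. Everything else is bookkeeping with constant terms.
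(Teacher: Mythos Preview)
Your proposal is correct and matches the paper's approach: the paper simply records the Corollary as an immediate consequence of Theorems~\ref{T1} and \ref{T2}, and what you have written is precisely the routine unpacking of that immediacy. The one point you took care over---that in the quotient case the functions with some zero coefficients are eventually dominated by the distinguished $D$-valued one---is indeed the only thing requiring a sentence of justification, and your argument for it is sound.
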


{\noindent \bf Acknowledge:}  We would like to express our thanks to the referee for his/her careful reading and good advices. This project receives the support of NSFC (No. 11971338) and NSF of Shanghai (No. 19ZR1424100).

\end{document}